\theoremstyle{plain}
\newtheorem{theorem}{Theorem}[section] % 1st argument is your name for it
\newtheorem{lemma}[theorem]{Lemma}     % 2nd argument is what is printed
\newtheorem{proposition}[theorem]{Proposition}
\newtheorem{remark}[theorem]{Remark}
\newtheorem*{theorem*}{Theorem}
\newtheorem*{definition*}{Definition}
\begin{document}
	\title{The flow of weights of some factors arising as fixed point algebras}
\author{Radu B. Munteanu}
\address[R. B. Munteanu]{Department of Mathematics, University of Bucharest, 14 Academiei St., 010014 and   
	Simion Stoilow Institute of Mathematics of the  Romanian Academy, 21 Calea Grivitei Street,  010702, Bucharest,  Romania}
\email{radu-bogdan.munteanu@g.unibuc.ro}
\renewcommand{\thefootnote}{}
\footnote{2010 \emph{Mathematics Subject Classification}: Primary 37A20; Secondary 46L36, 37A40.}

\footnote{\emph{Key words and phrases}: ergodic 
	flows, ergodic equivalence relations, fixed point algebra, approximate transitivity.}

\renewcommand{\thefootnote}{\arabic{footnote}}
\setcounter{footnote}{0}
\begin{abstract}
		In this paper we study the associated flow of some factors arising as fixed point algebras under product type actions on ITPFI factors. We compute their associated flow and show that under certain conditions these flows are approximately transitive and consequently the corresponding fixed point factors are ITPFI. 
	\end{abstract}
\maketitle
\section{Introduction}

A well known result of Krieger \cite{K} says that the flow of weights considered as a map between the isomorphism classes of approximately finite dimensional (AFD) factors of type III$_0$ to the conjugacy classes of properly ergodic flows is one to one and onto. We recall that with any countable equivalence relation $\mathcal{R}$ we can associate a von Neumann algebra  $M(\mathcal{R})$, via Feldman-Moore construction \cite{FM1, FM2}, and there is a bijective correpondence between the isomorphism classes of AFD factors and the orbit equivalence classes of hyperfinite equivalence relations. If $\mathcal{R}$ is a hyperfinite equivalence relation of type III then the flow of weights of  $M(\mathcal{R})$ is (up to conjugacy) the associated flow of $\mathcal{R}$. 

Approximate transitivity (abbreviated AT) is a property of ergodic actions introduced by Connes and Woods in \cite{CW}, which characterize the Araki-Woods (or ITPFI) factors among the AFD factors. They prove that an AFD factor of type III$_0$ is ITPFI if and only if its flow of weights is AT. Equivalently this result says that a group of ergodic automorphism is orbit equivalent to a product type odomoter if and only if its associated flow is AT. A measure theoretical proof of this result was given in \cite{H}. 

We recall that an action $\alpha$ of a Borel group $G$ on a measure space $(X,\nu)$ is approximately transitive if given $n<\infty$, functions $f_1,f_2, \ldots, f_n \in L^{1}_{+}(X,\nu)$ and $\varepsilon> 0,$
there exists a function $f\in L^{1}_{+}(X,\nu)$, elements $ g_1,\dots,g_m \ \in\ G$ for some
$m<\infty,$ and $\lambda_{j,k}\geq 0$, for $k=1,2,\dots,m$ and $j=1,2,\ldots,n$ such that
$$ \left\| f_j\ -\ \sum_{k=1}^m \lambda_{j,k}f\circ g_k\frac{d\nu g_k}{d \nu} \right\| \leq\varepsilon,\quad
j=1,\dots,n.$$
If $G=\mathbb{Z}$ and $\alpha$ is AT, then we say that $T=\alpha(1)$ is AT.  

%In general the computation of the flow associated to an ergodic automorphism (or equivalently, to a hyperfinite equivalence relation) is a difficult problem. To check that it satisfies certain properties it is more complicated. 

In \cite{GH}, Giordano and Handelman came up with a different approach for studying dynamical systems. They used matrix valued random walks and their Poisson boundary to deal with measure theoretical classification problems. In particular they reformulated approximate transitivity in terms of matrix valued random walks and proved that a fixed point factor under an involutory automorphism acting on a bounded ITPFI factor yields an ITPFI factor. They also managed to provide an interesting example of an ergodic AT($2$) nonsingular transformation which is not AT.  

In this paper we study the flow of weights of factors arising as fixed point algebras under product type actions of finite groups on Araki-Woods factors. 
By imposing certain conditions, we obtain a concrete description of their associated flow.

In Section 2 we consider an action of $\mathbb{Z}_3$ on an ITPFI$_2$ factor, we compute the associated flow and prove that it is approximately transitive. The same methods can be applied for product type actions of other finite groups on bounded Araki-Woods factors. In Section 3, we construct a family of fixed point factors arising as fixed point algebras under product type actions of $\mathbb{Z}_n$ which are ITPFI factors and their flow of weights are built under constant functions and have aproduct type odometers as base transformations. 

The methods developed in \cite{GH} are quite efficient in the study of ergodic transformation. Applying these techniques to study the the associated flow of the factors considered in this paper, may lead to a better understanding of the way in which the properties of ergodic transformations are translated in terms of random walks. 

\section{Fixed point subfactors of bounded Araki-Woods factors}

We recall that an ITPFI factor is said to be of bounded type if it can written as an infinite tensor product of full
matrix algebras of uniformly bounded size. It is proved in \cite{GS1} that any factor of bounded type is an ITPFI$_2$ factor. 
In this section we analyze in details the flow of weights of factors obtained as fixed point algebras under product type action of $\mathbb{Z}_3$ on ITPFI$_2$ factors. The same methods can be used when we deal with product actions of other finite groups, but the computations may be more difficult.

%A fixed point factor under a product type action of a finite group on an an unbounded ITPFI factor may not be an ITPFI factor, as it was shown in  \cite{GH}

Let $(l_{n})_{n\geq 1}$ be an increasing sequence of
positive integers, $(\lambda_{n})_{n\geq 1}$ a sequence of positive real numbers decreasing to zero  such that $l_1=1$, 
\begin{equation}\label{er}
\sum_{n=1}^\infty l_{n}\lambda_{n}=\infty
\end{equation}
and there exists $\epsilon_0>0$ such that 
\begin{equation}\label{lac}
%\frac{\lambda_n}{\prod_{i=1}^n-1 \lambda_i^{2l_i}}>\alpha
-\log\lambda_{n}+2\sum_{i=1}^{n-1}l_{i}\log\lambda_{i}>\epsilon_0.
\end{equation}
Consider the ITPFI factor $M=\otimes_{n\geq 1}(M_2(\mathbb{C}), \varphi_n)^{\otimes 2l_n}$, where the states
$\varphi_n$ on $M_2(\mathbb{C})$ are given by the density matrix $\frac{1}{1+\lambda_n}\left(
\begin{array}{cc}
1 & 0 \\
0 & \lambda_i \\
\end{array}
\right)$. On $M$ we act by the product type action of $\mathbb{Z}_3$, corresponding to the automorphism
$\alpha= \otimes_{n\geq 1} Ad \left( \begin{array}{ccc}
1 & 0  \\
0 & \epsilon 
\end{array}
\right)$, where $\epsilon=e^{\frac{2\pi i}{3}}$. We denote by $N$ the fixed point algebra under this action that is
$$N=\{x\in M : \alpha(x)=x\}.$$

The first condition will ensure that $N$ is a factor of type III (see [12]). The second condition is a noninteractive one that will allow us to give an explicit description of the flow of weights of $N$. We will, prove the following result: 

\begin{theorem}
The flow of weights of $N$ is AT and $N$ is an ITPFI factor.
\end{theorem}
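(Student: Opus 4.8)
The plan is to realise $N$ as the von Neumann algebra of an explicit subequivalence relation, reduce its flow of weights to a single Mackey range, and then verify approximate transitivity directly from the resulting special-flow description, concluding by the Connes--Woods theorem.

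First I would set up the Cartan picture. Write $M=M(\mathcal R)$, where $X=\{0,1\}^{\mathbb N}$ carries the product measure $\mu=\prod_j p_j$ with $p_j$ the weight on $\{0,1\}$ determined by the parameter $\Lambda_j$ (so $\Lambda_j=\lambda_n$ for each of the $2l_n$ coordinates belonging to block $n$), $\mathcal R$ is the tail relation of finite coordinate changes, and $A=L^\infty(X,\mu)$ is the diagonal Cartan subalgebra. Since $\alpha$ only rescales the off-diagonal matrix units, $A\subset N$; grading $e_{21},e_{12}$ by $\pm 1\in\mathbb Z_3$ and the diagonal by $0$, an elementary partial isometry is $\alpha$-fixed precisely when its total grade vanishes. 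Hence $A$ is Cartan in $N$ and $N=M(\mathcal S)$, where $\mathcal S=\ker c$ for the groupoid cocycle $c(x,y)=\sum_j (y_j-x_j)\bmod 3$. So the flow of weights of $N$ is the associated flow of $\mathcal S$, and $N$ is hyperfinite; by \eqref{er} it is of type III. If $N$ is not of type III$_0$ it is automatically ITPFI, so in the remaining case the Connes--Woods theorem reduces the statement to showing this associated flow is AT.

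Next I would reduce the associated flow of $\mathcal S$ to one Mackey range. Alongside $c$, consider the Radon--Nikodym cocycle $\rho(x,y)=\sum_j (y_j-x_j)\log\Lambda_j$ and the combined cocycle $(\rho,c)\colon\mathcal R\to\mathbb R\times\mathbb Z_3$, together with its skew-product relation $\mathcal R_{(\rho,c)}$ on $X\times\mathbb R\times\mathbb Z_3$. A short computation shows that the Maharam extension $\tilde{\mathcal S}$ on $X\times\mathbb R$ satisfies $(X\times\mathbb R)/\tilde{\mathcal S}\cong (X\times\mathbb R\times\mathbb Z_3)/\mathcal R_{(\rho,c)}$ as $\mathbb R$-spaces, via the map $(x,t)\mapsto(x,t,0)$: surjectivity of $c$ lets one reach every $\mathbb Z_3$-fibre, while the condition $c(x,y)=0$ is exactly $(x,y)\in\mathcal S$, which gives injectivity of the induced map on quotients, and the map is visibly equivariant for the flow translating $t$. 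Thus the flow of weights of $N$ is the $\mathbb R$-factor of the Mackey range of $(\rho,c)$.

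The computational heart is to evaluate this Mackey range using the non-interactive hypothesis \eqref{lac}. The point of that inequality is that $|\log\lambda_n|$ exceeds twice the weighted sum of all previous $|\log\lambda_i|$, so the partial sums $\sum_j (y_j-x_j)\log\Lambda_j$ are separated block by block and cannot accumulate ambiguously; this lets me present the Mackey range as a flow \emph{built under a function} over a product-type odometer, the extra $\mathbb Z_3$-coordinate being absorbed into the base as an auxiliary finite factor and the ceiling being, up to arbitrarily small error, locally constant on the fibres dictated by the successive blocks. With the flow in this explicit special-flow form I would then verify the defining inequality of AT directly: approximate the given $f_1,\dots,f_n\in L^1_+$ by functions that are constant on the ceiling fibres, and transport the problem to the base, where approximate transitivity of the product-type odometer supplies the single function $f$ and the coefficients $\lambda_{j,k}$, following the scheme of \cite{CW} and \cite{H}.

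I expect the main obstacle to be precisely this explicit identification and the uniform AT estimate: producing a single near-optimal $f$ and the weights $\lambda_{j,k}$ that work simultaneously for all $f_j$ requires controlling how the $\mathbb Z_3$-cocycle $c$ and the real cocycle $\rho$ combine along orbits, and it is here that condition \eqref{lac} is indispensable, since it is exactly what prevents interaction between consecutive blocks and keeps the ceiling function essentially constant. Once AT of the flow is established, $N$ is an AFD type III$_0$ factor with AT flow of weights, hence ITPFI by Connes--Woods, completing the proof.
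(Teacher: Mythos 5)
Your opening moves coincide with the paper's: you identify $N$ with the von Neumann algebra of the index-three subrelation $\ker c$ of the tail relation (the paper's $\mathcal R$), pass to its associated flow, and plan to finish with Connes--Woods. The gaps come at the computational core. You assert that the Mackey range of $(\rho,c)$ can be presented as a flow over a product-type odometer with ``the extra $\mathbb Z_3$-coordinate absorbed into the base as an auxiliary finite factor,'' and you attribute this to the lacunarity hypothesis \eqref{lac}. That inverts the roles of the two hypotheses. Condition \eqref{lac} is only the technical device that lets one write the associated flow as a flow built under the ceiling $\xi$ over the quotient base $(\widehat X,\widehat\nu,T)$ (Lemma \ref{fl4}, Proposition \ref{fl6}); the absorption of the $\mathbb Z_3$-component is exactly the hard part, and it is driven by the divergence condition \eqref{er}, $\sum_n l_n\lambda_n=\infty$. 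The paper extracts it from the polynomial identities with cube roots of unity: the norm of $\prod_{i>n}\bigl(\tfrac{1}{1+\lambda_i}+\epsilon\tfrac{\lambda_i}{1+\lambda_i}X_i\bigr)^{l_i}\bigl(\tfrac{1}{1+\lambda_i}+\overline{\epsilon}\tfrac{\lambda_i}{1+\lambda_i}X_i\bigr)^{l_i}$ tends to $0$ because $\prod_i\bigl|1-\lambda_i/(1+\lambda_i)^2\bigr|^{l_i}\to 0$, which yields the equidistribution estimates \eqref{uno}--\eqref{six}: the binomial mass of each mod-$3$ residue class of the walk is asymptotically one third of the total. Without these estimates there is no reason the Mackey range is not a genuine $3$-fold extension, and your proposal offers nothing in their place; your closing paragraph names this as ``the main obstacle'' but does not resolve it.

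The second gap is structural: your plan to ``transport the problem to the base'' and deduce AT of the flow from AT of a product odometer is not a valid step for a non-constant ceiling. The transfer holds for constant ceilings --- this is precisely what the paper uses in Section 3, where the ceiling is the constant $\log\lambda$ and the base is shown to be conjugate to a product odometer --- but in Section 2 the ceiling $\xi$ is genuinely variable, and flows built under non-constant functions over AT (even product-odometer) bases need not be AT; otherwise every type III$_0$ factor admitting such a lacunary presentation would be ITPFI, contradicting known examples such as the non-ITPFI fixed point factor of \cite{GH} cited in this paper. Moreover, you give no argument that the base $T$ of the mod-$3$ subrelation (defined by the minimal positive jump of $\log\delta$ \emph{within} $\mathcal R$) is a product odometer; the paper never claims this and instead verifies the AT inequality directly on $\Omega$, taking $f=\sum_{\overline j\in I^m}\prod_{i=n+1}^{m}\binom{l_i}{j_i}\binom{2l_i}{j_i}^{-1}\chi_{C(0_n,\overline j)\times[0,\delta]}$, translating it by the explicit flow times $F_{-s(\overline k)}$ with the Radon--Nikodym weights of Proposition \ref{fl6}, and bounding the $L^1$-error in approximating each $\chi_{C(a)\times[0,\delta]}$ by the quantities in \eqref{uno}--\eqref{six}. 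So your skeleton is right, but the proof is missing precisely the uniform estimates that constitute the theorem's content.
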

We proceed now to the computation of the flow of weights of $N$. Let us consider the product space $X=\prod_{n\geq 1}\{0,1\}$ endowed with the product measure $\nu=\otimes_{n\geq 1}\nu_{\lambda_n}^{\otimes 2l_n}$ where $\nu_{\lambda_n}(0)=\frac{1}{1+\lambda_n}$ and $\nu_{\lambda_n}(1)=\frac{\lambda_n}{1+\lambda_n}$. We recall that tail equivalence relation on $(X,\nu)$, denoted by $\mathcal{T}$, is defined for $x=(x_k)_{k \geq 1}$ and $y=(y_i)_{i\geq 1}$ by 
\[x\mathcal{T}y\text{ if and only if there exist }m\geq 1\text{ such that }x_k=y_k\text{ for }k>m.\]
It is well known that $M$ can be identified with  $M(\mathcal{T})$, the von Neumann algebra associated to the tail equivalence relation via the Feldmann-Moore construction.
Let $\mathcal{R}$ be the hyperfinite equivalence relation on $(X,\nu)$, given by
\[x\mathcal{R}y\text{ if and only if there exist } x\mathcal{T} y\text{ and }\sum_{i=1}^\infty(x_k-y_k)=0 \ (\text{mod }3).\]
Following [13] the fixed point algebra $N$ is isomorphic to  $M(\mathcal{R})$, the von Neumann algebra corresponding to $\mathcal{R}$. Since $M(\mathcal{T})$
is of type III$_0$, the assymptotic ratio set of $\mathcal{T}$ is $\{0,1\}$. Then, as $\mathcal{R}$ is a type III subequivalence relation of $\mathcal{T}$ (because $N$ is a factor of type III) it follows that $\mathcal{R}$ and the corresponding factor $N$ are of type III$_0$.

Thus, the flow of weights of $N$ is (up to conjugacy) the associated flow of the equivalence relation $\mathcal{R}$. This is given by the action of $\mathbb{R}$ by translation on $X\times\mathbb {R}/\mathcal{\widetilde{R}}$ (the quotient stands for ergodic decomposition), where $\mathcal{\widetilde{R}}$ is the equivalence relation on $(X\times\mathbb {R}, \nu\times m)$ defined by $(x,t) \mathcal{\widetilde{R}} (y,s)$ if a nd only if $x\mathcal{R}y$ and $s=t-\log\delta(x,y)$, where $\delta(x,y)$ is the Radon-Nicodym cocycle of $\nu$ with respect to $\mathcal{R}$ and $m$ is a finite measure on $\mathbb{R}$ equivalent to the Lebesgue measure.

Let $\widehat{X}=\prod\limits_{n\geq 1}\widehat{X}_{n}$, where $\widehat{X}_{1}=\{0,1,2\}$, and
$\widehat{X}_{n}=\{0,1,\ldots,2l_{n}\}$, $n\geq 2$ with the product
measure $\widehat{\nu}=\otimes\widehat{\nu}_{n}$ where,
for $n\geq 2$,
\[\widehat{\nu}_{n}(i)= \left(\begin{array}{c}
   2l_{n} \\
   i
 \end{array}
 \right) \frac{\lambda_{n}^{i}}{(1+\lambda_{n})^{2l_{n}}}, \ \ \ 0\leq i \leq
2l_{n}.\]

Let $(X_n,\nu_n)=\left(\{0,1\}, \nu_{\lambda_n}\right)^{\otimes 2l_n}$. Notice that $(X,\nu)$ can be naturally identified with the product space $\prod_{n\geq 1}(X_n, \nu_n)$. 
Let $\mathcal{S}_n$ be the equivalence relation on $(X_n,\nu_n)$ defined by $x\mathcal{S}_n y$ if and only if $\nu_n(x)=\nu_n(y)$. It is then easy to see that $(X_n,\nu_n)/\mathcal{S_n}=(\widehat{X}_{n},\widehat{\nu}_{n})$
where the projection map  
$\pi_n : X_n\rightarrow\widehat{X}_n$ is given by 
$\pi_n(x)=\sum\limits_{i=1}^{2l_n}x_i$ for $x=(x_1,x_2,\ldots, x_{2l_n})\in X_n.$
We further consider the
equivalence relation $\mathcal{S}$ on $(X,\nu)$ given by \[x\mathcal{S}y\text{ if and	only if } x\mathcal{R} y \text{ and }\log\delta(x,y)=0.\]
Clearly $\mathcal{S}$ is a subequivalence relation of $\mathcal{R}$. If  $x=(x_n)_{n\geq 1}, y=(y_n)_{n\geq 1}\in X=\prod_{n\geq 1}X_n$, it is easy to see from that
$$x\mathcal{S}y\text{ if and only if }x_n\mathcal{S}_ny_n \text{ for all }n\geq 1.$$
and therefore (see for example [14]) we have 
\begin{align*}\text{$
(X,\nu)/\mathcal{S}=\prod_{n=1}^\infty(X_n,\nu_n)/\mathcal{S}_n)=  \prod_{n= 1}^\infty(\widehat{X}_{n},\widehat{\nu}_{n})=(\widehat{X},\widehat{\nu})$}
\end{align*}
The quotient map $\pi: X\rightarrow \widehat{X}$ is given for $x=(x_n)_{n\geq 1}\in X=\prod_{n\geq 1} X_n$ by 
$$\pi(x_1,x_2,\ldots )=(\pi_1(x_1),\pi_2(x_2),\ldots )$$

Notice that by (2), the measure $\nu$ is lacunary. For $x\in X$ let $$\xi(x)=\min\{\log\delta(y,x);\ (y,x)\in\mathcal{R},\
\log\delta(y,x)>0\}$$ 
Since $\xi(x)=\xi(y)$ if $x\mathcal{S}y$ we can regard $\xi$ as a function on the quotient space $\widehat{X}$. We get an ergodic automorphism
$T$ defined on $\widehat{X}$ by setting $T(\pi(x))=\pi(x')$ where
$x\mathcal{R}x'$ and $\log \delta(x',x)=\xi(\pi(x))$.
Let $$\Omega=\{(z,t) :  z\in \widehat{X},  0\leq t< \xi(y)\}$$
and denote by $\mu$ the restriction of the product measure $\widehat{\nu}\otimes \lambda$, where $\lambda$ is the Lebesgue measure on $\mathbb{R}$.  
Then, the associated flow $\{F_{s}\}_{s\in\mathbb{R}}$ of
$\mathcal{R}$ is the flow built under the ceiling
function $\xi$ with base automorphism $T$ (see for example \cite{HO} or \cite{KW}), that is $\{F_s\}_{s\in\mathbb{R}}$ is defined on $(\Omega,\mu)$ as follows:
\begin{equation*}\label{flow}
F_{s}(z,t)=\left\{
\begin{array}{l}
(z,t+s) \text{ if} \ \ 0 \leq s +t< \xi(y)\\[0.2cm]
(T(z),t+s-\xi(z)) ,\text{ if}  \ \ \xi(y)\leq s +t< \xi(T(z))+\xi(z)\\[0.2cm]
\cdots
\end{array}
\right. \end{equation*}

\begin{lemma}\label{fl4}
	Let $x,x'\in X$, $x\mathcal{R} x'$,
	$s=\log\delta(x',x)$, $z=\pi(x)$, $z'=\pi(x')$. If $(z,t)$ and $(z',t)$ are in $\Omega$ then
	$F_{s}(z,t)=(z',t)$.
\end{lemma}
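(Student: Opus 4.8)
The plan is to realize the claim directly inside the flow built under the ceiling function $\xi$: under the hypotheses I will show that $z'$ lies on the $T$-orbit of $z$ and that the cocycle value $s$ equals precisely the corresponding partial sum of $\xi$ along that orbit, so that the assertion $F_s(z,t)=(z',t)$ falls out of the explicit description of $\{F_s\}$ recorded just before the statement.

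First I would recall that, because $\nu$ is lacunary, the minimum defining $\xi$ is attained and the set of cocycle values $\{\log\delta(y,x):y\mathcal{R}x\}$ is a discrete subset of $\mathbb{R}$; consequently $T$ is a well-defined ergodic automorphism of $\widehat{X}$ whose orbits are exactly the $\mathcal{R}$-classes read modulo $\mathcal{S}$. Since $x\mathcal{R}x'$, the points $z=\pi(x)$ and $z'=\pi(x')$ lie in the same $T$-orbit, so $z'=T^{k}z$ for a unique $k\in\mathbb{Z}$. I will carry out the case $k\ge 0$ (equivalently $s\ge 0$), the case $k<0$ following by applying this case to the pair $(x',x)$ and using that $\{F_s\}_{s\in\mathbb{R}}$ is a flow.

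Next I would pin down $s$. Iterating the definition of $T$ produces representatives $x=x_0\mathcal{R}x_1\mathcal{R}\cdots\mathcal{R}x_k$ with $\pi(x_j)=T^{j}z$ and $\log\delta(x_j,x_{j-1})=\xi(T^{j-1}z)$, whence the cocycle identity gives $\log\delta(x_k,x)=\sum_{j=0}^{k-1}\xi(T^{j}z)$. Because $\pi(x')=z'=T^{k}z=\pi(x_k)$ we have $x'\mathcal{S}x_k$, so $\log\delta(x',x_k)=0$ and therefore
\[
s=\log\delta(x',x)=\log\delta(x',x_k)+\log\delta(x_k,x)=\sum_{j=0}^{k-1}\xi(T^{j}z).
\]
Finally I would feed this into the explicit formula for the flow: starting from $(z,t)$ with $0\le t<\xi(z)$ and increasing the time by $s=\sum_{j=0}^{k-1}\xi(T^{j}z)$, the trajectory crosses the ceilings over $z,Tz,\dots,T^{k-1}z$ in turn and arrives in the cell over $T^{k}z=z'$ at height $t+s-\sum_{j=0}^{k-1}\xi(T^{j}z)=t$. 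The hypothesis $(z',t)\in\Omega$, i.e.\ $0\le t<\xi(z')$, guarantees that the trajectory stops exactly there rather than crossing one more ceiling, so $F_s(z,t)=(z',t)$.

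The step I expect to be the crux is the identification of $s$ with the telescoping sum $\sum_{j=0}^{k-1}\xi(T^{j}z)$: this is where lacunarity is indispensable, since it is what forces every cocycle value $\log\delta(x',x)$ to be an exact sum of consecutive minimal increments $\xi(T^{j}z)$, equivalently what makes the tower over $T$ with roof $\xi$ exhaust the whole $\mathcal{R}$-orbit. Once this is in hand, the remaining work—matching the multi-step flow formula to the sum and disposing of the sign via the group law of $\{F_s\}$—is routine.
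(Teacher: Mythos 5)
Your proposal is correct and follows essentially the same route as the paper's proof: lacunarity makes the set of cocycle values discrete, so $s=\log\delta(x',x)$ telescopes into the sum $\xi(z)+\xi(Tz)+\cdots$ of minimal increments along the $T$-orbit, after which the explicit formula for $\{F_s\}$ applied step by step and the group law (for $s<0$) finish the argument. The only cosmetic difference is direction: the paper enumerates the finitely many intermediate cocycle values in $(0,s)$ and identifies the corresponding points with $Tz,\dots,T^{n}z$, whereas you iterate $T$ from $z$ and match the partial sums to $s$ via the observation $x'\mathcal{S}x_k$.
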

\begin{proof}
Lat us assume first that  $s=\log\delta(x',x)$ is positive. Since the measure $\nu$ is lacunary, there exist finitely many values, say $n$, of $\log\delta(z,x)$ between $0$ and $\log\delta(x',x)$. Corresponding to these distinct values we can find $x_{1}, x_{2},\ldots, x_{n}$ in the $\mathcal{R}$-orbit of $x$ such that
$0<\log\delta(x_{1},x)<\cdots<\log\delta(x_{n},x)<\log\delta(x',x)$.
Then $$\log\delta(x',x)=\log\delta(x_{1},x)+\log\delta(x_{2},x_{1})+\cdots+\log\delta(x_{n},x_{n-1})+\log\delta(x',x_{n}).$$
If $z_{k}=\pi(x_{k})$, we have $z_{k}=T^{k}(z)$ and $F_{\xi(T^{k-1}z)}(T^{k-1}(z),t)=(T^{k}(z),t)$, for $1\leqslant k\leqslant n$. As $\log\delta(x',x)=\xi(z)+\xi(T(z))+\cdots+\xi(T^{n}(z))$. we have\begin{align*}
F_{s}(z,t)&=F_{\xi(z)+\xi(T(z))+\cdots+\xi(T^{n}(z))}(z,t)\\
&=F_{\xi(T(z))+\cdots+\xi(T^{n}(z))}(T(z),t)=\cdots=(z',t).
\end{align*}	
If $s<0$, then $-s>0$. As above, $F_{-s}(z',t)=(z,t)$ and
therefore $F_{s}(z,t)=F_{s}(F_{-s}(z,t))=(z',t)$.
\end{proof}
Notice that if
$z=(z_{1},z_{2},\ldots z_{m},z_{m+1}\ldots)$ and for some $n\in
\mathbb{Z}$, $T^{n}(z)=z'$ with $z=(z'_{1},z'_{2},\ldots,
z'_{m},z_{m+1},\ldots)$ then \[\frac{d\widehat{\nu}\circ T^{n}}{d\widehat{\nu}}(z)=\prod_{i=1}^{m}\frac{\widehat{\nu}_{i}(z'_{i})}{\widehat{\nu_{i}}(z_{i})}.\]

\noindent The following proposition] is immediate consequence of the above remark and Lemma \ref{fl4}.
\begin{proposition}\label{fl6}
	Let $z,z'\in \widehat{X}$ and let
	$x,x'\in X$, $x\mathcal{R}x'$ such that $z=\pi(x)$, $z'=\pi(x')$ and $x_{i}=x'_{i}$ if $i>2\sum\limits_{i=1}^m l_{i}$. If $(z,t), (z',t)\in \Omega$ and  $s=\log\delta(x',x)=\sum\limits_{i=1}^{m}(z'_i-z_i)\log \lambda_i$ then
	\[F_{s}(z,t)=(z',t)\text{ and }\frac{d\mu\circ
		F_{s}}{d\mu}(z,t)=\prod_{i=1}^{m}\frac{\widehat{\nu}_{i}(z'_{i})}{\widehat{\nu_{i}}(z_{i})}.\]
\end{proposition}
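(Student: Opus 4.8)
The plan is to read off both conclusions from Lemma~\ref{fl4} and the Radon--Nikodym remark immediately preceding the statement, the only genuine work being to check that the hypotheses match and to carry out the elementary cocycle computation that identifies $s$.

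First I would dispose of the identity $F_s(z,t)=(z',t)$. The data in the statement---namely $x\mathcal R x'$, $s=\log\delta(x',x)$, $z=\pi(x)$, $z'=\pi(x')$, and $(z,t),(z',t)\in\Omega$---are exactly the hypotheses of Lemma~\ref{fl4}, so that lemma yields $F_s(z,t)=(z',t)$ with nothing further to prove. It then remains to verify the stated value of $s$. Since $x_i=x'_i$ for $i>2\sum_{i=1}^m l_i$, the cocycle $\delta(x',x)$ is a finite product of ratios $\nu_{\lambda_i}(x'_j)/\nu_{\lambda_i}(x_j)$ taken over the coordinates lying in the first $m$ blocks. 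On a single coordinate in block $i$ one has $\nu_{\lambda_i}(1)/\nu_{\lambda_i}(0)=\lambda_i$, so a coordinate passing from $0$ to $1$ contributes $+\log\lambda_i$ and one passing from $1$ to $0$ contributes $-\log\lambda_i$. Summing over block $i$, the net contribution is the difference between the number of ones of $x'$ and of $x$ in that block, which by the definition $\pi_i(x_i)=\sum_j(x_i)_j$ equals $(z'_i-z_i)\log\lambda_i$; adding over $i\le m$ gives $s=\sum_{i=1}^m(z'_i-z_i)\log\lambda_i$.

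For the Jacobian I would first translate the coordinate hypothesis into the quotient: since block $i$ occupies the coordinates indexed up to $2\sum_{j\le i}l_j$, the equality $x_i=x'_i$ for $i>2\sum_{i=1}^m l_i$ says that $x$ and $x'$ agree on every block beyond the $m$-th, whence $z_n=\pi_n(x_n)=\pi_n(x'_n)=z'_n$ for $n>m$. Next, the argument in the proof of Lemma~\ref{fl4} shows that passing from $(z,t)$ to $(z',t)$ along the flow is realized by finitely many applications of the base automorphism, so $z'=T^n(z)$ for a suitable $n$. Because $\mu$ is the restriction of $\widehat\nu\otimes\lambda$ and $F_s$ acts on the $\mathbb R$-factor by a translation whose amount does not affect the Lebesgue density, the Jacobian of $F_s$ coincides with that of $T^n$ on the base, i.e. $\frac{d\mu\circ F_s}{d\mu}(z,t)=\frac{d\widehat\nu\circ T^n}{d\widehat\nu}(z)$. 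The displayed remark---applicable precisely because $z$ and $z'$ agree beyond block $m$---then gives $\frac{d\widehat\nu\circ T^n}{d\widehat\nu}(z)=\prod_{i=1}^m\widehat\nu_i(z'_i)/\widehat\nu_i(z_i)$, which is the asserted formula.

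There is no real obstacle here: as the author notes, the statement is an immediate consequence of the two preceding facts. The only points demanding care are bookkeeping ones---matching the coordinate range $i>2\sum_{i=1}^m l_i$ with agreement beyond the $m$-th block, and observing that the flow's Radon--Nikodym derivative reduces to that of $T$ on $\widehat X$ because the Lebesgue measure on $\mathbb R$ is translation invariant---both of which are routine once the indexing conventions are fixed.
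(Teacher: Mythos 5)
Your proposal is correct and follows exactly the route the paper takes: the paper gives no separate proof, declaring the proposition an immediate consequence of Lemma~\ref{fl4} and the preceding remark on $\frac{d\widehat{\nu}\circ T^{n}}{d\widehat{\nu}}$, which is precisely the combination you carry out. Your added bookkeeping---the cocycle computation identifying $s$ via $\nu_{\lambda_i}(1)/\nu_{\lambda_i}(0)=\lambda_i$, the translation of the coordinate hypothesis into $z_n=z'_n$ for $n>m$, and the reduction of the Jacobian of $F_s$ to that of $T^{n}$ by translation invariance of Lebesgue measure---simply makes explicit the routine details the paper leaves to the reader.
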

Let $n>1$. Let us introduce some notations. If $a_i\in \widehat{X}_i$ for $i=1,2,\ldots,n$ let
$$C(a_1,a_2,\ldots, a_n)=\{y\in \widehat{X} :
y_{i}=a_i, i=1,2,\ldots,n\}.$$
In praticular we define
\begin{align*}
C(0_n)&=\{y\in \widehat{X} : y_i=0, i=1,2,\ldots,n \}
\end{align*}
and for $a\in\{1,2\}$ 
$$C(a,0_{n-1})=\{y\in\widehat{X} : y_1=a, y_i=0, i=2,3,\ldots,n \}.$$
Let $\mathbb{R}[X_{n+1},X_{n+2},\ldots, X_{m}]$ be the ring
of polynomials in $X_{n+1}, X_{n+2},\ldots, X_{m}$ with real
coefficients. For a polynomial $P=\sum a_{i_{n+1},i_{n+2},\ldots,
	i_{m}}X_{n+1}^{k_{n+1}}X_{n+2}^{k_{n+2}}\cdots X_{m}^{k_{m}}$ in $\mathbb{R}[X_{n+1},X_{n+2},\ldots,X_{m}]$
we set
\[\|P\|=\sum |a_{i_{n+1},i_{n+2},\ldots, i_{m}}|.\]
It is easy to see that $P\mapsto\|P\|$ defines a norm and if $P$ and $Q$ are
two polynomials from $\mathbb{R}[X_{n+1},X_{n+2},\ldots X_{m}]$ we have
\[\|P\cdot Q\|\leq \|P\|\cdot\|Q\|\]
For $j\in\{0,1,2\}$ let 
\begin{align*}
I_{j}^{m}&=\{(i_{n+1},i_{n+2},\ldots, i_{m}): 0\leq i_{k}\leq
l_{k},\  n< k\leq m, \ i_{n+1}+ i_{n+2}+\cdots+
i_{m}=j\text{ (mod 3)}\}
\end{align*}
and
\begin{align*}
P_{j}^{m}&=\{(p_{n+1},p_{n+2},\ldots, p_{m}): 0\leq p_{k}\leq
2l_{k},\  n< k\leq m, \ p_{n+1}+ p_{n+2}+\cdots+
p_{m}=j\text{ (mod 3)}\}
\end{align*}
Let $I^m=I_{0}^{m}\cup
I_{1}^{m}\cup
I_{2}^{m}$ and $P^m=P_{0}^{m}\cup
P_{1}^{m}\cup
P_{2}^{m}$. We denote by $\overline{i}$ an element $(i_{n+1},i_{n+2},\ldots, i_{m})\in I^{m}$ and by $\overline{p}$ an element $(p_{n+1},p_{n+1},\ldots, p_{m})\in P^m$. Let $\epsilon$ be the primitive $3$rd root of unity. With this notation, we have
\begin{align}
\prod_{i=n+1}^{m}&\left(\frac{1}{1+\lambda_{i}}+\epsilon \frac{\lambda_{i}}{1+\lambda_{i}}X_{i}\right)^{l_{i}}
\left(\frac{1}{1+\lambda_{i}}+\overline{\epsilon}\frac{\lambda_{i}}{1+\lambda_{i}}X_{i}\right)^{l_{i}}=\nonumber\\
&=\sum_{\overline{j}\in I_{0}^{m},\overline{k}\in
	I_{0}^{m}}\prod_{i=n+1}^{m}\binom{l_i}{j_i}\binom{l_i}{k_i}\frac{\lambda_{i}^{j_{i}+k_{i}}}{(1+\lambda_{i})^{2
		l_{i}}}X_{i}^{k_{i}+j_{i}}+\sum_{\overline{j}\in
	I_{1}^{m},\overline{k}\in
	I_{1}^{m}}\prod_{i=n+1}^{m}\binom{l_i}{j_i}\binom{l_i}{k_i}\frac{\lambda_{i}^{j_{i}+k_{i}}}{(1+\lambda_{i})^{2
		l_{i}}}X_{i}^{k_{i}+j_{i}}\nonumber\\
&+\sum_{\overline{j}\in
	I_{2}^{m},\overline{k}\in
	I_{2}^{m}}\prod_{i=n+1}^{m}\binom{l_i}{j_i}\binom{l_i}{k_i}\frac{\lambda_{i}^{j_{i}+k_{i}}}{(1+\lambda_{i})^{2
		l_{i}}}X_{i}^{k_{i}+j_{i}}-\\ \label{r}
&-\sum_{\overline{j}\in
	I_{0}^{m},\overline{k}\in
	I_{1}^{m}}\prod_{i=n+1}^{m}\binom{l_i}{j_i}\binom{l_i}{k_i}\frac{\lambda_{i}^{j_{i}+k_{i}}}{(1+\lambda_{i})^{2
		l_{i}}}X_{i}^{k_{i}+j_{i}}-\sum_{\overline{j}\in
	I_{0}^{m},\overline{k}\in
	I_{2}^{m}}\prod_{i=n+1}^{m}\binom{l_i}{j_i}\binom{l_i}{k_i}\frac{\lambda_{i}^{j_{i}+k_{i}}}{(1+\lambda_{i})^{2
		l_{i}}}X_{i}^{k_{i}+j_{i}}\nonumber\\
&-\sum_{\overline{j}\in
	I_{1}^{m},\overline{k}\in
	I_{2}^{m}}\prod_{i=n+1}^{m}\binom{l_i}{j_i}\binom{l_i}{k_i}\frac{\lambda_{i}^{j_{i}+k_{i}}}{(1+\lambda_{i})^{2
		l_{i}}}X_{i}^{k_{i}+j_{i}} \nonumber
\end{align}
Notice that
\begin{align}
\lim_{m\rightarrow\infty}&\left\|\prod_{i=n+1}^{m}\left(\frac{1}{1+\lambda_{i}}+\epsilon\frac{\lambda_{i}}{1+\lambda_{i}}X_{i}\right)^{l_{i}}
\left(\frac{1}{1+\lambda_{i}}+\overline{\epsilon}\frac{\lambda_{i}}{1+\lambda_{i}}X_{i}\right)^{l_{i}}\right\|\nonumber\\
& \ \ \leq\lim_{m\rightarrow\infty}\prod_{i=n+1}^{m}\left\|\frac{1}{(1+\lambda_i)^2}+\frac{\lambda_{i}^{2}}{(1+\lambda_{i})^2}X_{i}^{2}-\frac{\lambda_{i}}{(1+\lambda_{i})^2}X_{i}\right\|^{l_{i}}\\ \label{rr} 
& \ \ =\lim_{m\rightarrow\infty}\prod_{i=n+1}^{m}\left|1-\frac{\lambda_{i}}{(\lambda_{i}+1)^{2}}\right|^{l_{i}}=0 \nonumber
\end{align}
since
\[\lim_{m\rightarrow\infty}\sum_{i=n+1}^{m}l_{i}\lambda_{i}=\infty.\]
Note that if $p_{n+1}+p_{n+2}+\cdots+p_m=0$ (mod $3$) then   
\begin{align}
\sum_{\overline{j}\in I_{0}^{m},\overline{k}\in
	I_{0}^{m}, j_i+k_i=p_i}\prod_{i=n+1}^{m}\binom{l_i}{j_i}\binom{l_i}{k_i}\frac{\lambda_{i}^{j_{i}+k_{i}}}{(1+\lambda_{i})^{2
		l_{i}}}+ 2\sum_{\overline{j}\in
	I_{1}^{m},\overline{k}\in
	I_{2}^{m},j_i+k_i=p_i}\prod_{i=n+1}^{m}\binom{l_i}{j_i}\binom{l_i}{k_i}\frac{\lambda_{i}^{j_{i}+k_{i}}}{(1+\lambda_{i})^{2
		l_{i}}}\nonumber\\
=\prod_{i=n+1}^{m}\binom{2l_i}{p_i}\frac{\lambda_{i}^{2p_{i}}}{(1+\lambda_{i})^{2
		l_{i}}},
\end{align}
if $p_{n+1}+p_{n+1}+\cdots+p_m=1$ (mod $3$) we have   
\begin{align}
\sum_{\overline{j}\in I_{1}^{m},\overline{k}\in
	I_{1}^{m}, j_i+k_i=p_i}\prod_{i=n+1}^{m}\binom{l_i}{j_i}\binom{l_i}{k_i}\frac{\lambda_{i}^{j_{i}+k_{i}}}{(1+\lambda_{i})^{2
		l_{i}}}+ 2\sum_{\overline{j}\in
	I_{0}^{m},\overline{k}\in
	I_{2}^{m},j_i+k_i=p_i}\prod_{i=n+1}^{m}\binom{l_i}{j_i}\binom{l_i}{k_i}\frac{\lambda_{i}^{j_{i}+k_{i}}}{(1+\lambda_{i})^{2
		l_{i}}}\nonumber\\
=\prod_{i=n+1}^{m}\binom{2l_i}{p_i}\frac{\lambda_{i}^{2p_{i}}}{(1+\lambda_{i})^{2
		l_{i}}},
\end{align}
and if 
if $p_{n+1}+p_{n+2}+\cdots+p_m=2$ (mod $3$) we have   
\begin{align}
\sum_{\overline{j}\in I_{2}^{m},\overline{k}\in
	I_{2}^{m}, j_i+k_i=p_i}\prod_{i=n+1}^{m}\binom{l_i}{j_i}\binom{l_i}{k_i}\frac{\lambda_{i}^{j_{i}+k_{i}}}{(1+\lambda_{i})^{2
		l_{i}}}+ 2\sum_{\overline{j}\in
	I_{0}^{m},\overline{k}\in
	I_{1}^{m},j_i+k_i=p_i}\prod_{i=n+1}^{m}\binom{l_i}{j_i}\binom{l_i}{k_i}\frac{\lambda_{i}^{j_{i}+k_{i}}}{(1+\lambda_{i})^{2
		l_{i}}}\nonumber\\
=\prod_{i=n+1}^{m}\binom{2l_i}{p_i}\frac{\lambda_{i}^{2p_{i}}}{(1+\lambda_{i})^{2
		l_{i}}}, \label{rrr}
\end{align}
Then, by (\ref{r}) - (\ref{rrr}) it foloows that for $m\longrightarrow\infty$ we have
\begin{eqnarray}\label{uno}
\sum_{\overline{p}\in P_0^m}
\left|\sum_{\overline{j}\in I_{0}^{m}, \overline{k}\in I_{0}^{m},
	k_{i}+j_{i}=p_{i}}3\prod_{i=n+1}^{m}\binom{l_i}{j_i}\binom{l_i}{k_i}-\prod_{i=n+1}^{m}
\binom{2l_i}{p_i}\right|\prod_{i=n+1}^{m}\frac{\lambda_{i}^{p_{i}}}{(1+\lambda_{i})^{2 l_{i}}}
\rightarrow 0,
\end{eqnarray}
\begin{eqnarray}\label{due}
\sum_{\overline{p}\in P_2^m}
\left|\sum_{\overline{j}\in I_{1}^{m}, \overline{k}\in I_{1}^{m},
	k_{i}+j_{i}=p_{i}}3\prod_{i=n+1}^{m}\binom{l_i}{j_i}\binom{l_i}{k_i}-\prod_{i=n+1}^{m}
\binom{2l_i}{p_i}\right|\prod_{i=n+1}^{m}\frac{\lambda_{i}^{p_{i}}}{(1+\lambda_{i})^{2 l_{i}}}
\rightarrow 0,
\end{eqnarray}
\begin{eqnarray}\label{tres}
\sum_{\overline{p}\in P_1^m}
\left|\sum_{\overline{j}\in I_{2}^{m}, \overline{k}\in I_{2}^{m},
	k_{i}+j_{i}=p_{i}}3\prod_{i=n+1}^{m}\binom{l_i}{j_i}\binom{l_i}{k_i}-\prod_{i=n+1}^{m}
\binom{2l_i}{p_i}\right|\prod_{i=n+1}^{m}\frac{\lambda_{i}^{p_{i}}}{(1+\lambda_{i})^{2 l_{i}}}
\rightarrow 0,
\end{eqnarray}
\begin{eqnarray}\label{quatro}
\sum_{\overline{p}\in P_0^m}
\left|\sum_{\overline{j}\in I_{1}^{m},\overline{k}\in I_{2}^{m},
	k_{i}+j_{i}=p_{i}} 3\prod_{i=n+1}^{m}
\binom{l_i}{j_i}\binom{l_i}{k_i}- \prod_{i=n+1}^{m}C_{2
	l_{i}}^{p_{i}}\right|\prod_{i=n+1}^{m}\frac{\lambda_{i}^{p_{i}}}{(1+\lambda_{i})^{2
		l_{i}}}\rightarrow 0,
\end{eqnarray}
\begin{eqnarray}\label{cinque}
\sum_{\overline{p}\in P_2^m}
\left|\sum_{\overline{j}\in I_{0}^{m},\overline{k}\in I_{2}^{m},
	k_{i}+j_{i}=p_{i}} 3\prod_{i=n+1}^{m}
\binom{l_i}{j_i}\binom{l_i}{k_i}- \prod_{i=n+1}^{m}C_{2
	l_{i}}^{p_{i}}\right|\prod_{i=n+1}^{m}\frac{\lambda_{i}^{p_{i}}}{(1+\lambda_{i})^{2
		l_{i}}}\rightarrow 0,
\end{eqnarray}
\begin{eqnarray}\label{six}
\sum_{\overline{p}\in P_1^m}
\left|\sum_{\overline{j}\in I_{0}^{m},\overline{k}\in I_{1}^{m},
	k_{i}+j_{i}=p_{i}} 3\prod_{i=n+1}^{m}
\binom{l_i}{j_i}\binom{l_i}{k_i}- \prod_{i=n+1}^{m}C_{2
	l_{i}}^{p_{i}}\right|\prod_{i=n+1}^{m}\frac{\lambda_{i}^{p_{i}}}{(1+\lambda_{i})^{2
		l_{i}}}\rightarrow 0,
\end{eqnarray}
For $\overline{k}=(k_{n+1},\ldots, k_{m})\in I^m$, we set
$$s(\overline{k})=\sum\limits_{i=n+1}^{m}k_{i}\log\lambda_{i}.$$ 
and if $a=(a_1,a_2,\ldots,a_n)$ define $r(a)\in\{0,1,2\}$ by 
$$r(a)=\sum_{i=1}^n a_{i} \ (\text{mod }3)$$
and
$$s(a)=\sum_{i=1}^n a_i\log\lambda_i.$$
For $a=(a_1,a_2,\ldots,a_n)$ and $\overline{k}=(k_{n+1},\ldots, k_{m})\in P^m$
let
$$C(a,\overline{k})=C(a)\cap \{y\in \widehat{X} : y_i=k_i,  \text{for }i=n+1,n+2,\ldots, m\}.$$
Let $a=(a_1,a_2,\ldots, a_n)$ with $r(a)=2$, $\overline{j}\in I^m$, $\overline{k}=I^m_1$ and $z\in C(0_n,\overline{j})$. Let $z'\in C(a,\overline{j}+\overline{k})$ and  such that $z'_{i}=z_{i}$ for $i>m$. We can find $x$
and $x'$ in $X$ such that $x\mathcal{R} x'$, $\pi(x)=z$, $\pi(x')=z'$ and
$\log\delta(x',x)=s(\overline{k})=\sum\limits_{i=n+1}^{m}k_{i}\log \lambda_{i}$. Then, by
Lemma \ref{fl4},
\begin{align*}
F_{s(\overline{k})}(z,t)=(z',t) \text{ and }F_{-s(\overline{k})}(z',t)=(z,t).
\end{align*}

Thus for $\overline{k}\in P_1^m$ we have
\begin{equation*}
F_{s(\overline{k})}\left(C(0_n, \overline{j})\right)\\
= C(a,\overline{j}+\overline{k})
\end{equation*}
and by Proposition \ref{fl6}
\begin{equation*}
\frac{d\mu
	F_{-s(\overline{k})}}{d\nu}(z',t)=\prod_{i=n+1}^{m}\frac{\nu_{i}(j_{i})}{\nu_{i}(j_{i}+k_{i})}\cdot \prod_{i=1}^{n}\frac{1}{\nu_{i}(a_{i})}=\prod_{i=n+1}^{m}\frac{\binom{2l_i}{j_i}}{\binom{2l_i}{j_i+k_i}}
\cdot\frac{1}{\lambda_{i}^{k_{i}}}\cdot\prod_{i=1}^n\frac{1}{\binom{2l_i}{a_i}}\cdot\frac{1}{\lambda_{i}^{a_{i}}},
\end{equation*}
for $(z',t)\in C(a,\overline{j}+\overline{k})$. Therefore
\begin{align*}
&&\chi_{C(0_{n},\overline{j})\times[0,\delta]}(F_{-s(\overline{k})})\frac{d\mu F_{-s((\overline{k}))}}{d\mu}=\prod_{i=n+1}^{m}\frac{\binom{2l_i}{j_i}}{\binom{2l_i}{j_i+k_i}}\cdot\frac{1}{\lambda_{i}^{k_{i}}}\cdot \prod_{i=1}^n\frac{1}{\binom{2l_i}{a_i}}\cdot\frac{1}{\lambda_{i}^{a_{i}}}\cdot\chi_{C(a,\overline{j}+\overline{k})\times[0,\delta]} 
\end{align*}
Let $n>2$ and $0<\delta<\epsilon_0$ (such that $\overline{X}\times
[0,\delta]\subseteq \Omega$). We define
\begin{align*}
f=\sum_{\overline{j}\in I^m}
\prod_{i=n+1}^{m}\frac{\binom{l_i}{j_i}}{\binom{2l_i}{j_i}}\chi_{C(0_{n},\overline{j})\times[0,\delta]}
\end{align*}
We have
\begin{align*}
\prod_{i=1}^n\binom{2l_i}{a_i}&\lambda_{i}^{a_{i}}\sum_{\overline{k}\in
	I_{1}^{m}}\prod_{i=n+1}^{m}  \lambda_{i}^{k_{i}}\binom{l_i}{k_i}\cdot f(F_{-s(\overline{k})})\frac{\mu F_{-s(\overline{k})}}{d\mu}   =\sum_{\overline{k}\in I_{1}^{m}}\sum_{\overline{j}\in I^m_0\cup I^m_1\cup I^m_2} \prod_{i=n}^{m}\frac{\binom{l_i}{k_i} \binom{l_i}{j_i}}{\binom{2l_i}{j_i+k_i}}\chi_{C(a,  \overline{j}+\overline{k})\times[0,\delta]}\\
=&\sum_{\overline{k}\in I_{1}^{m},\overline{j}\in
	I_{0}^{m}}\prod_{i=n+1}^{m}\frac{\binom{l_i}{k_i} \binom{l_i}{j_i}}{\binom{2l_i}{j_i+k_i}}
\chi_{C(a, \overline{j}+\overline{k})\times[0,\delta]}
+\sum_{\overline{k}\in I_{1}^{m}, \ \overline{j}\in
	I_{1}^{m}}\prod_{i=n+1}^{m}\frac{\binom{l_i}{k_i} \binom{l_i}{j_i}}{\binom{2l_i}{j_i+k_i}}
\chi_{a, \overline{j}+\overline{k})\times[0,\delta]}+\\
&\sum_{\overline{k}\in I_{1}^{m},\overline{j}\in
	I_{2}^{m}}\prod_{i=n+1}^{m}\frac{\binom{l_i}{k_i}\binom{l_i}{j_i}}{C_{2l_{i}}^{j_{i}+k_{i}}}\chi_{C(a, \overline{j}+\overline{k})\times[0,\delta]}\\
=&\sum_{\overline{p}\in P_1^m} \ \sum_{\substack{\overline{k}\in
		I_{1}^{m},\overline{j}\in I_{0}^{m}\\
		j_{i}+k_{i}=p_{i}}}\ \prod_{i=n+1}^{m}\frac{\binom{l_i}{k_i}\binom{l_i}{j_i}}{\binom{2l_i}{p_i}}\chi_{C(a,\overline{p} )\times[0,\delta]}
+\sum_{\overline{p}\in P_2^m} \ \sum_{\substack{\overline{k}\in
		I_{1}^{m},\overline{j}\in I_{0}^{m}\\
		j_{i}+k_{i}=p_{i}}}\ \prod_{i=n+1}^{m}\frac{\binom{l_i}{k_i}\binom{l_i}{j_i}}{\binom{2l_i}{p_i}}\chi_{C(a,\overline{p})\times[0,\delta]}+\\
&\sum_{\overline{p}\in P_0^m} \ \sum_{\substack{\overline{k}\in
		I_{1}^{m},\overline{j}\in I_{2}^{m}\\
		j_{i}+k_{i}=p_{i}}}\ \prod_{i=n+1}^{m}\frac{\binom{l_i}{k_i}\binom{l_i}{j_i}}{\binom{2l_i}{p_i}}\chi_{C(a,\overline{p})\times[0,\delta]}
\end{align*}
Therefore,
\begin{align*}
\left\|3\prod_{i=1}^n\right.& \left.\binom{2l_i}{a_i}\lambda_{i}^{a_{i}}\cdot\sum_{\overline{k}\in
	I_{1}^{m}}    \prod_{i=n+1}^{m}\binom{l_i}{k_i}\lambda_{i}^{k_{i}}\cdot  f(F_{-s(\overline{k})})\frac{\mu F_{-s(\overline{k})}}{d\mu}-\chi_{C(a)\times[0,\delta]}\right\|\\
&\leq \sum_{\overline{p}\in P_{1}^m}\left\|\left(\sum_{\overline{k}\in I_{1}^{m}, \overline{j}\in
	I_{0}^{m}, 
	j_{i}+k_{i}=p_{i}}3\prod_{i=n+1}^{m}\frac{\binom{l_i}{k_i}\binom{l_i}{j_i}}{\binom{2l_i}{p_i}}-1\right)\chi_{C(a,\overline{p})\times[0,\delta]}\right\|\\
&+\sum_{\overline{p}\in P^m_2}\left\|\left(\sum_{\overline{k}\in I_{1}^{m}, \overline{j}\in
	I_{1}^{m}, 
	j_{i}+k_{i}=p_{i}}3\prod_{i=n+1}^{m}\frac{\binom{l_i}{k_i}\binom{l_i}{j_i}}{\binom{2l_i}{p_i}}-1\right)\chi_{C(a, \overline{p})\times[0,\delta]}\right\|\\
&+\sum_{\overline{p}\in P^m_0}\left\|\left(\sum_{\overline{k}\in I_{1}^{m}, \overline{j}\in
	I_{2}^{m}, 
	j_{i}+k_{i}=p_{i}}3\prod_{i=n+1}^{m}\frac{\binom{l_i}{k_i}\binom{l_i}{j_i}}{\binom{2l_i}{p_i}}-1\right)\chi_{C(a, \overline{p})\times[0,\delta]}\right\|\\
%\end{align*}
%\begin{align*}
%\ \ \ \ \ \ 
&=\sum_{\overline{p}\in P^m_1}\left|\sum_{\overline{k}\in I_{1}^{m}, \overline{j}\in
	I_{0}^{m},
	j_{i}+k_{i}=p_{i}}3\prod_{i=n+1}^{m}\binom{l_i}{k_i}\binom{l_i}{j_i}-\prod_{i=n+1}^{m}C_{2
	l_{i}}^{p_{i}}\right|\prod_{i=n+1}^{m}\frac{\lambda_{i}^{p_{i}}}{(1+\lambda_{i})^{2 l_{i}}}\|\chi_{C(a)\times[0,\delta]}\|\\
&+\sum_{\overline{p}\in P^m_2}\left|\sum_{\overline{k}\in I_{1}^{m},\overline{j}\in I_{1}^{m},
	j_{i}+k_{i}=p_{i}}3\prod_{i=n+1}^{m}\binom{l_i}{k_i}\binom{l_i}{j_i}-\prod_{i=n+1}^{m}C_{2
	l_{i}}^{p_{i}}\right|\prod_{i=n+1}^{m}\frac{\lambda_{i}^{p_{i}}}{(1+\lambda_{i})^{2
		l_{i}}}\|\chi_{C(a)\times[0,\delta]}\|\\
&+\sum_{\overline{p}\in P^m_0}\left|\sum_{\overline{k}\in I_{1}^{m},\overline{j}\in I_{2}^{m},
	j_{i}+k_{i}=p_{i}}3\prod_{i=n+1}^{m}\binom{l_i}{k_i}\binom{l_i}{j_i}-\prod_{i=n+1}^{m}C_{2
	l_{i}}^{p_{i}}\right|\prod_{i=n+1}^{m}\frac{\lambda_{i}^{p_{i}}}{(1+\lambda_{i})^{2
		l_{i}}}\|\chi_{C(a)\times[0,\delta]}\|.
\end{align*}
%\end{document}
Similar inequalities can be obtained if $r(a)\in\{0,1\}$. 
Let $\varepsilon>0$. Then, by (8)-(13), we can choose $m$ large enough such that for every $a=(a_1,a_2,\ldots,a_n)$ we have
$$\left\|3\prod_{i=1}^n\binom{2l_i}{a_i}\lambda_{i}^{a_{i}}\cdot\sum_{\overline{k}\in
	I_{s}^{m}}    \prod_{i=n+1}^{m}\binom{l_i}{k_i}\lambda_{i}^{k_{i}}\cdot  f(F_{-s(\overline{k})})\frac{\mu F_{-s(\overline{k})}}{d\mu}-\chi_{C(a)\times[0,\delta]}\right\|\leq 
\varepsilon\|\chi_{C(a)}\|$$
where $s\in\{0,1,2\}$ satisfies $r(a)+s=0$ (mod $3$). 
Since any function from $L^1_+(\Omega, \mu)$ function can be approximated arbitrary closed by linear combinations of funtions of the form $\chi_{C(a)\times[t,t+\delta}$ it follows that the associated flow is AT.  
	
\begin{remark}\rm{
The result remains true for other abelian groups acting by diagonal xerox type actions on bounded Araki-Woods factors, but the computations may become more complicated. For fixed point factors under actions of infinite groups the problem is more difficult. If \ $\lim_{n\rightarrow}l_n\lambda_n=\infty$ it was proved in \cite{BGM} that the natural action of $S_\infty$ (the group of finite permutations of $\mathbb{N}$) on $(X,\nu)$ and the associated flow of $S_\infty$ are AT. This result implies that the fixed point algebra under the standard action of the torus is an ITPFI factor. In the absence of the assumption that $\lim_{n\rightarrow}l_n\lambda_n=\infty$ we don't know the answer. }
\end{remark}

\section{A fanily of factors arising as fixed point factors under $\mathbb{Z}_n$ actions}	
	
In this section we construct a family of fixed point factors under actions of $\mathbb{Z}_{n}$ which are ITPFI factors and their associated flows
are (up to conjugacy) built under constant functions and have product type odometers as base transformations. We recall that, in  general, a factor arising as fixed point algebra under a product type action may not be ITPFI factor. In \cite{GH} it is given an example of a fixed point factor under an involutory automorphism on an unbounded ITPFI which is not an ITPFI factor.  

Let us consider two positive integers $\lambda, k\geq 2$. For $n\geq 1$, let $r_{n}=\lambda^{(k+1)^{n}}$. Define $k_{0}=k$ and for $n\geq 1$ let $k_{n}=r_{n}+r_{n}^{2}+\cdots r_{n}^{k}$. On $M_{k_{n}}(\mathbb{C})$ consider the diagonal states
$\varphi_{n}(\cdot)=$tr$(a^{n}\cdot)$, where $a^{0}=\frac{1}{k}I_{k}$ and
$$a^{n}=\frac{1}{k+1}\cdot\text{diag}(\underbrace{ \frac{1}{r_{n}^{k}}\cdots,\frac{1}{r_{n}^{k}}}_{r_{n}^{k}\text{ times }},\underbrace{\frac{1}{r_{n}^{k-1}},\cdots,\frac{1}{r_{n}^{k-1}}}_{r_{n}^{k-1}\text{ times }},\cdots \underbrace{\frac{1}{r_{n}},\cdots,\frac{1}{r_{n}}}_{r_{n}\text{ times}}, 1)$$.

Consider the ITPFI factor $M=\otimes_{n\geq 0}(M_{k_{n}}(\mathbb{C}),\varphi_{n})$
and the primitive root of unity $\epsilon=e^{2\pi i/n}$. On $M$  we act by the product action of $\mathbb{Z}_{k}$ corresponding to the automorphism
$\alpha=\underset{n\geq 0}\otimes\text{Ad }u_n$  of order $k$, where
\[u_0=\text{diag}\left(1,\epsilon,\epsilon^{2},\ldots, \epsilon^{k-1}\right), \]and, for $n\geq 1$,
\[u_n=\text{diag}\left(I_{r_{n}^{k}},\epsilon I_{r_{n}^{k-1}},\ldots,
\epsilon^{k-1}I_{r_{n}}, 1\right) .\] We denote by $N$ the fixed point von Neumann algebra under this action. On the product space $X=\prod_{n\geq
	0}\{0,1,,\ldots, k_{n}\}$ we consider the product measure $\nu=\otimes_{n\geq 0}\nu_{n}$ defined by $\nu_{n}(i)=a^{n}_{i+1,i+1}$ for $0\leq i\leq k_{n}$ and $n\geq 0$.
For $n\geq 1$, divide $X_{n}=\{0,1,\ldots,k_{n}\}$ in subsets $I_{n}^{j}$, where
$$I_{n}^{0}=\{i\in \mathbb{Z}; 0\leq i< r_{n}^{k}\}$$ and, for $1\leq j\leq k$
$$I_{n}^{j}=\{i\in \mathbb{Z}: \  r_{n}^{k-j+1}+r_{n}^{k-j+2}+\cdots+r_{n}^{k}\leq i< r_{n}^{k-j}+r_{n}^{k-j+1}+\cdots+r_{n}^{k}\}.$$
Let now $Z=\prod_{n\geq 0} Z_n$, where $Z_0=\{0,1,\ldots,k-1\}$ and $Z_n=\{0,1,\ldots,k\}$ for $n\geq 1$ endowed with the product measure $\mu=\otimes_{n\geq 1}\mu_{n}$, where
$\nu_{0}(i)=\frac{1}{k}$, for $0\leq i\leq k-1$ and $\mu_{n}(i)=\frac{1}{k+1}$ for $0\leq i \leq k$, $n\geq 1$. Define the projection $\pi:X\rightarrow Z$, by 
\[\pi(x_{1},x_{2},\ldots)=(z_{1},z_{2},\ldots),\] where $z_{1}=x_{1}$ and for $n\geq2$, $z_{n}=j$ if $x\in I_{n}^{j}$.

Let $\mathcal{R}$ be the equivalence relation on $X$ given by
\[x\mathcal{R} y\text{ if and only if }x\mathcal{T} y\text{ and }\sum\pi(x)_{i}-\pi(y)_{i}=0 \  (\text{mod }k)\]
where $\mathcal{T}$ is the tail equivalence on $X$. Then, proceeding as in \cite{GM},  we can show that $N$ is isomorphic to $M(\mathcal{R})$.

Let $A_{n}=\{(z,w)\in Z\times Z: z_{kn+i}=1, w_{kn+i}=0, \  i=1,2,\ldots,k \}$ for  $n\geqslant 1$. Then
$\sum\limits_{n=1}^{\infty}\mu\times\mu(A_{n})=\infty$. By Borel-Cantelli, for $\mu\times\mu$ - almost all $(z,w)\in Z\times Z$ we have $(z,w)\in
A_{n}$ for infinitely many $n$. But if $(z,w)\in A_{n}$ there exists $j\in\{1,2,\ldots,k\}$
such that $\sum\limits_{j=1}^{kn+i}(z_{j}-w_{j})=0 \ \ (\text{mod } 3)$. Therefore
for $\mu\times\mu$ - almost all $(z,w)$, there are infinitely many
$n\in\mathbb{N}$ such that $\sum\limits_{i=1}^{n}(z_{i}-w_{i})=0 \ \
(\text{mod }3)$.
Now as $\nu=\mu\circ\pi^{-1}$, it follows that $\nu\times\nu$ - a.e. $(x,y)\in X\times X$, there are infinitely many $n$ such that 
$$\sum_{i=1}^n \pi(x)_i-\pi(y)_i=0 \ (\text{mod }k).$$
By using the same type arguments as in \cite{M} it follows that $\mathcal{R}$ is ergodic and of type III and therefore $N$ is factor of type III.

We compute the flow of weights of $N$ and show that it is approximately transitive. In fact we prove more then this:

\begin{theorem}
The flow of weights of $N$ is, up to conjugacy, the flow built under the constant function $\log\lambda$ and has as base transformation the measure preserving product odometer defined on $Z=\prod_{n\geq 0} Z_n$, where $Z_0=\{0,1,\ldots,k-1\}$ and $Z_n=\{0,1,\ldots,k\}$ for $n\geq 1$.  
\end{theorem}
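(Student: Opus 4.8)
The plan is to follow the method of Section 2. Since $\mathcal{R}$ is ergodic of type III$_0$, the flow of weights of $N$ is, up to conjugacy, the associated flow of $\mathcal{R}$, i.e.\ the Mackey range of the Radon--Nikodym cocycle $\log\delta\colon\mathcal{R}\to\mathbb{R}$. Proceeding as in Section 2 I would realise this flow as the flow built under a ceiling function $\xi$ over the quotient $X/\mathcal{S}$, where $\mathcal{S}=\{(x,y)\in\mathcal{R}:\log\delta(x,y)=0\}$, with base automorphism $T$ sending $\pi(x)$ to $\pi(x')$ whenever $\log\delta(x',x)=\xi(\pi(x))$. Everything then reduces to two assertions: that $\xi$ is the constant $\log\lambda$, and that $T$ is (conjugate to) the product odometer on $Z$.

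First I would compute $\log\delta$ explicitly. Since $\varphi_0$ is the trace, $\nu_0$ is uniform and the zeroth coordinate contributes nothing to the cocycle; for $n\ge 1$ every point of the block $I_n^{\,j}$ carries the same mass $\tfrac{1}{k+1}r_n^{-(k-j)}$, so if $x\mathcal{R}y$ with $z=\pi(x)$, $w=\pi(y)$ then $\log\delta(x,y)=\sum_{n\ge1}(z_n-w_n)\log r_n$, which depends only on $\pi(x),\pi(y)$ and hence descends to $Z$. The arithmetic heart of the construction is that the weights $\log r_n$ are calibrated to be precisely the successive place values of the base-$(k+1)$ system, normalised so that the least positive value of $\log\delta$ is $\log\lambda$; since the digit set $\{0,1,\dots,k\}$ is exactly $\{0,\dots,(k+1)-1\}$, the map $\log\delta$ simply reads off the difference of two base-$(k+1)$ expansions. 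From this I read off that the values of $\log\delta$ fill out the lattice $(\log\lambda)\mathbb{Z}$, whose least positive element is $\log\lambda$; here the zeroth coordinate is crucial, acting as a reservoir that realises any prescribed residue $\bmod\,k$ so that the defining congruence of $\mathcal{R}$ never obstructs a move. Thus $\xi\equiv\log\lambda$ is constant.

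Next I would identify the base. For the kernel, $\log\delta(x,y)=0$ means $\sum_{n\ge1}(z_n-w_n)(k+1)^n=0$, and uniqueness of the base-$(k+1)$ representation of $0$ forces $z_n=w_n$ for all $n\ge1$; the congruence $\sum_i(z_i-w_i)\equiv0\ (\mathrm{mod}\ k)$ then gives $z_0-w_0\equiv0$, whence $z_0=w_0$ as $z_0,w_0\in\{0,\dots,k-1\}$. Therefore $\mathcal{S}$ is exactly the fibre relation of $\pi$, so $X/\mathcal{S}=(Z,\mu)$ with $\mu$ the product of the uniform measures (the pushforward measure computed in the setup). To pin down $T$ I unwind $\log\delta(x',x)=\log\lambda$: this is ``add one to $(z_1,z_2,\dots)$ in base $k+1$'', together with the compensating shift of $z_0$ forced by membership in $\mathcal{R}$. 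A short count shows that adding one in base $k+1$ changes the digit sum by $1\pmod k$ no matter how many carries occur, so $T$ is the product of the base-$(k+1)$ odometer on $\prod_{n\ge1}Z_n$ with the rotation $z_0\mapsto z_0-1$ on $Z_0$; conjugating by the measure-preserving flip $z_0\mapsto-z_0$ and using the Chinese Remainder Theorem (legitimate because $\gcd(k,k+1)=1$) identifies this with the product odometer on $Z$. The same coprimality shows the $\mathbb{Z}_k$-spectrum and the $(k+1)$-adic spectrum meet only in $\{1\}$, so $T$ is ergodic and preserves $\mu$.

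Assembling these pieces gives the associated flow of $\mathcal{R}$ as the flow built under the constant $\log\lambda$ with base the product odometer on $Z$, which is the claim. I expect the main obstacle to be the determination of $T$: one must check that the least positive value of $\log\delta$ is attained \emph{canonically}, so that $T$ is single-valued and coincides with the odometer rather than some other rearrangement. This is exactly the point where the precise base-$(k+1)$ calibration of the weights $r_n$ and the reservoir role of the zeroth coordinate are indispensable; without the exact tuning the ceiling would fail to be constant and the induced map would not be an odometer.
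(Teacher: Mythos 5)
Your proposal is correct and, up to the identification of the base transformation, follows the same architecture as the paper: the paper likewise reduces, ``proceeding as in Section 2,'' to the flow built under the least positive cocycle value, with base the map $S$ that adds one in base $k+1$ on the coordinates $n\geq 1$ while shifting $z_0\mapsto z_0-1 \pmod k$ (the paper's $a_z=z_0+z_1+\cdots+z_{N(z)-1}-1$ equals $z_0-1$ mod $k$, since the carried digits all equal $k$) --- exactly the transformation you derive, including your observations that the zeroth coordinate is cocycle-neutral and that a carry of length $N-1$ changes the digit sum by $1-k(N-1)\equiv 1 \pmod k$; your identification of $\mathcal{S}$ with the fibre relation of $\pi$ via uniqueness of base-$(k+1)$ representations is also the computation the paper leaves implicit. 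Where you genuinely diverge is in proving that this $S$ is conjugate to the product odometer on $Z$: the paper exhibits an explicit sequence of cylinder partitions $\Delta_n$, each a single $S$-tower ($\Delta_n=\{S^jA_n^0\}$) refining the previous one, and invokes Baxter's criterion, whereas you split $T=R_{-1}\times O$ (rotation on $\mathbb{Z}_k$ times the $(k+1)$-adic odometer), conjugate the rotation by $z_0\mapsto -z_0$, and appeal to coprimality of $k$ and $k+1$. Your route is the more conceptual one --- it amounts to the compact-group isomorphism $\mathbb{Z}_k\times\varprojlim\mathbb{Z}_{(k+1)^n}\cong\varprojlim\mathbb{Z}_{k(k+1)^n}$ carrying $(1,1)$ to $1$, or equivalently Halmos--von Neumann applied to equal pure point spectra, and your coprimality remark correctly yields ergodicity of the product --- while the paper's tower argument buys an explicit combinatorial conjugacy; to make your step airtight you should say which of these two mechanisms you mean, since ``using the Chinese Remainder Theorem'' by itself does not produce a measure conjugacy. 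One bookkeeping point affecting your write-up and the paper equally: with $r_n=\lambda^{(k+1)^{n}}$ as printed, $\log\delta$ takes values in $(k+1)\log\lambda\,\mathbb{Z}$ and the ceiling would be $(k+1)\log\lambda$; your ``normalisation so that the least positive value is $\log\lambda$'' tacitly corrects this to $r_n=\lambda^{(k+1)^{n-1}}$, which is evidently what is intended.
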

\begin{proof}
Let $S$ be the measure preserving automorphism defined on 
$(Z,\mu)$ by
$S(a,k,k,k,\ldots)=(b,0,0,0,\ldots)$ with $b=a-1$ (mod $k$) and, if $z\neq (a,k,k,k,\ldots)$
\begin{equation*}S(z)_{n}=\left\{\begin{array}{lcl}
a_{z} & \text{ if }& n=0,  \\
0 & \text { if } & n\geq 1, n<N(z), \\
z_{n}+1 & \text  { if }& n=N(z), \\
z_{n}  & \text{ if }& n> N(z),
\end{array}
\right.
\end{equation*}where $N(z)=\min\{n\geq 1; z_{n}<k\}$ and
$a_{z}\in\{0,1,\ldots,k-1\}$ satisfies
$a_{z}=z_{0}+z_{1}+z_{2}+\cdots+z_{N(z)-1}-1\text{
	(mod k)}$.
Then, proceeding as in Section 2, it follows that, up to conjugacy, the associated flow of $\mathcal{R}$ is the flow built under
the constant function $\log\lambda$ with base transformation $S$.
Notice that for $n\geq 0$ we have $$S(C(a,\underbrace{k,k,k,\ldots,k}_{n \text{ times}}))=C(b,\underbrace{0,0,0,\ldots,0}_{n \text { times}}),$$ where $b=a-1$ (mod $k$). Let $\Delta_{n}$ be the partition of $Z$ into cylinders of the form $C(a_{0}, a_{1},\ldots, a_{n})$. For $n\geq0$, let $A_{n}^{0}=\{z\in Z: z_{i}=0, i=0,1,\ldots,n\}$.
Then the sequence of partiotions $\{\Delta_{n}, n\geq 0\}$ have the following properties:
\begin{itemize}
	\item[(i)] $\Delta_{n}=\{A_{n}^{j}=T^{j}(A_{n}^{0}); j=0,1,\ldots, k\cdot (k+1)^{n-1}-1\}$ for $n\geq0$;
	\item [(ii)] $\Delta_{n+1}$ is a refinement of $\Delta_{n}$ and $A_{n}^{i}=A_{n+1}^{i}\cup A_{n+1}^{i+k\cdot (k+1)^{n-1}}\cup A_{n+1}^{i+2k\cdot (k+1)^{n-1}}\cup\cdots \cup A_{n+1}^{i+k^{2}(k+1)^{n-1}}$ for $i=0,1,\ldots, k\cdot(k+1)^{n-1}-1$ and $n\geq0$.
\end{itemize}
It then follows (see for example \cite{B}), that $S$ is conjugate to the product odometer defined on $(Z,\mu)$ and therefore is AT. This implies that the associated flow of $\mathcal{R}$ is AT and consequently, $N$ is an ITPFI factor.
\end{proof}

\subsection*{Acknowledgement. }This work was supported by a grant from Romanian Ministry of Research and Innovation, CNCS - UEFISCDI, project number  PN III-P4-ID-PCE-2016-0823 within PNCDI III.


\begin{thebibliography}{9}
	\bibitem{BGM} B.M. Baker, T. Giordano and R. B. Munteanu, Von Neumann Algebras Arising as Fixed Point Algebras  under Xerox Type Actions, {\it J. of Operator Theory} \textbf  {72} (2014), 343-369.
	\bibitem {B} J. R. Baxter, \emph{On a class of ergodic transformations}, Ph. D. Thesis,
	University of Toronto (1969).
	\bibitem{CW} A. Connes and E.~J. Woods, {Approximately transitive 		flows and ITPFI factors.} Erg. Theory. Dynam. Sys. {\bf5}(1985), 203--236.
	%\bibitem{CFW} A. Connes, J. Feldman and B. Weiss, {An amenable equivalence relation is generated by a single transformation.} Erg. Theory Dynam. Sys. {\bf1}(1981), 431-450.
	%\bibitem{DH} A. H. Dooley and T. Hamachi, Nonsingular dynamical systems, Bratteli diagrams and markov odometers, Isr. J. Math. {\bf 138}(2003),	93--123.
	%\bibitem{EG} G. A. Elliott and T. Giordano, {Every approximately transitive amenable action of a locally compact group is a Poisson boundary.} C. R. Math. Acad. Sci. Soc. R. Can. {\bf 21}(1999), no.~1, 9-15.
	\bibitem{FM1} J. Feldman and C.~C. Moore, {Ergodic equivalence
		relations, cohomology, and von Neumann algebras I.} Trans. Amer. Math.
	Soc. {\bf234}(1977), no.~2, 289--324.
	\bibitem{FM2} J. Feldman and C.~C. Moore, {Ergodic equivalence relations, cohomology, and von Neumann
		algebras II.} Trans. Amer. Math. Soc. {\bf234}(1977), no.~2,
	325--359.
	\bibitem{GH} T. Giordano and D. Handelman, {Matrix-valued Random Walks and
		Variations on Property AT } Munster J. of Math. {\bf1}(2008), 15--72.
	\bibitem{GS1}T. Giordano and G. Skandalis,  \emph{On infinite tensor products of
		factors of type I$_{2}$}. Ergodic Theory Dynam. Systems {\bf5}(1985),
	no. 4, 565--586.
	\bibitem{GM} T. Giordano and R. B. Munteanu, Approximate transitivity of the ergodic  action of the group of finite permutations of $\mathbb{N}$ on $\{0, 1 \}^{\mathbb{N}}$,  Ergodic Theory Dynam. Systems	
	 	
	\bibitem{H} T. Hamachi, { A measure theoretical proof of the Connes-Woods theorem on AT-flows.} Pacific J. Math. {\bf154}(1992), 67-85.
	\bibitem{HO} T. Hamachi and M. Osikawa, {Ergodic groups of automorphisms and
		Krieger's theorems.} Seminar on Mathematical Science of Keio Univ.
	{\bf3}(1981), 1--113.
	\bibitem{KW} Y. Katznelson and B. Weiss, {The classification of nonsingular actions, revisited.}
	\bibitem{K} W. Krieger,{ On ergodic flows and isomorphisms of factors. } Math. Ann. {\bf223}(1976), 19--70.
	Erg. Theory Dynam. Sys. {\bf11}(1991), no.~2, 333--348.
	\bibitem{M} R.-B. Munteanu, {On constructing ergodic hyperfinite equivalence relations of non-product type.} Canad. Math. Bull. {\bf6}(2013), 136--147.
\end{thebibliography}
\end{document}